\theoremstyle{plain}
\newtheorem*{theorem*}{Theorem}
\newtheorem*{principle*}{Principle}
\newtheorem{theorem}{Theorem}
\newtheorem{lemma}[theorem]{Lemma}
\newtheorem{proposition}[theorem]{Proposition}
\newtheorem{corollary}[theorem]{Corollary}
\newtheorem{definition}[theorem]{Definition}
\newtheorem*{lemma*}{Lemma}
\newtheorem*{proposition*}{Proposition}
\newtheorem*{corollary*}{Corollary}
\newtheorem*{conjecture*}{Conjecture}
\newtheorem*{definition*}{Definition}
\theoremstyle{remark}
\newtheorem{remark}[theorem]{Remark}
\theoremstyle{definition}
\tikzstyle{startstop} = [rectangle, rounded corners, minimum width=3cm, minimum height=1cm, text centered, text width= 4cm, draw=black]
\tikzstyle{io} = [rectangle, minimum width=3cm, minimum height=1cm, text centered, text width=3cm, draw=black]
\tikzstyle{process} = [rectangle, minimum width=3cm, minimum height=1cm, text centered, text width=4cm, draw=black]
\tikzstyle{decision} = [rectangle, minimum width=3cm, minimum height=1cm, text centered, text width=4cm, draw=black]
\tikzstyle{arrow} = [thick,->,>=stealth]
\def\dim{\operatorname{dim}}
\author{David Wen}
\title{Moduli of Canonical Surfaces of General Type with $K_S^2 = 1$, $p_g = 2$}
  \address{David Wen, National Center for Theoretical Sciences, No. 1 Sec. 4 Roosevelt Rd., National Taiwan University , Taipei, 106, Taiwan}
  \email{dwen@ncts.ntu.edu.tw}
\begin{document}

\newcommand{\bigslant}[2]{{\raisebox{.2em}{$#1$}\left/\raisebox{-.2em}{$#2$}\right.}}

\thanks{}

\begin{abstract} 
We study the moduli space of minimal surfaces of general type with $K_S^2 = 1$ and $p_g = 2$ and show that it is irreducible, has dimension $28$ and admits a compactification which is unirational. 
\end{abstract}

\maketitle

\section{Introduction}

The moduli of surfaces of general type is a vast wilderness, which becomes more manageable to study after fixing a few invariants. In \cite{Gieseker77}, Gieseker showed that for minimal surface of general type, $S$, there exists a quasiprojective course moduli space, and from \cite{Bombieri73}, by fixing invariants $K_S^2$, $p_g(S)$ and $q(S)$, the course moduli space, $M_{(K_S^2, p_g(S), q(S))}$, has finitely many components.

Some natural questions to ask about $M_{(K_S^2, p_g(S), q(S))}$ are: Is it irreducible? Is it connected? What is the dimension? The various results studying these questions have been carried out case by case and references can be made to \cite{BCP06} for some of the cases where the answers are known. 

In this paper, we consider the case of $(K_S^2, p_g(S), q(S)) = (1,2,0)$ and show the following:
\begin{theorem}[Theorem \ref{main}]
$M_{(1,2,0)}$ is irreducible, has dimension $28$ and has a projective compactification $\overline{M}_{(1,2,0)}$, via a GIT quotients, that is unirational.
\end{theorem}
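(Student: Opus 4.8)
The plan is to reduce the study of $M_{(1,2,0)}$ to the explicit geometry of the canonical model, and then to realize the moduli space as a GIT quotient of a projective space. For a minimal surface $S$ of general type with $K_S^2 = 1$, $p_g(S) = 2$, $q(S) = 0$ one has $\chi(\mathcal{O}_S) = 3$, and Riemann--Roch together with Kawamata--Viehweg vanishing gives $h^0(nK_S) = 3 + \binom{n}{2}$ for $n \ge 2$, while $h^0(K_S) = 2$; thus the pluricanonical dimensions in degrees $1,\dots,6$ are $2,4,6,9,13,18$. Comparing these with the dimensions of the images of the multiplication maps from lower degrees, I would show that the canonical ring $R(S) = \bigoplus_{n \ge 0} H^0(nK_S)$ is generated by two elements $x_0, x_1$ in degree $1$, one element $y$ in degree $2$, and one element $z$ in degree $5$, subject to a single relation in degree $10$. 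This presents the canonical model $X$ of $S$ as a hypersurface $X_{10} \subset \mathbb{P}(1,1,2,5)$ of degree $10$; adjunction gives $K_X = \mathcal{O}_X(10 - 9) = \mathcal{O}_X(1)$ and $\mathcal{O}_X(1)^2 = 10/(1 \cdot 1 \cdot 2 \cdot 5) = 1$, recovering the invariants. Completing the square in $z$ exhibits $X$ equivalently as a double cover of $\mathbb{P}(1,1,2)$ branched along the degree-$10$ curve $\{f_{10}(x_0, x_1, y) = 0\}$.

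Conversely, I would verify that a sufficiently general $X_{10} \subset \mathbb{P}(1,1,2,5)$ has at worst rational double points and is the canonical model of a surface with the prescribed invariants, so that the construction sets up a family over a dense open subset of $\mathbb{P}\big(H^0(\mathbb{P}(1,1,2,5), \mathcal{O}(10))\big)$. Counting monomials of weighted degree $10$ gives $\dim H^0(\mathcal{O}(10)) = 49$, hence an irreducible parameter space $\mathbb{P}^{48}$; since two hypersurfaces give isomorphic surfaces exactly when they lie in one orbit of $G = \mathrm{Aut}(\mathbb{P}(1,1,2,5))$, the irreducibility of $M_{(1,2,0)}$ follows from that of $\mathbb{P}^{48}$.

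For the dimension and the compactification I would form a GIT quotient $\overline{M}_{(1,2,0)} = \mathbb{P}^{48} /\!\!/ G$ for a suitable linearization. Apart from $\mathrm{GL}(2)$ acting on $(x_0, x_1)$ and the scalings of $y$ and $z$, the group $G$ contains the unipotent substitutions $y \mapsto y + q(x_0, x_1)$ and $z \mapsto z + g_5(x_0, x_1, y)$, so $\dim G = 4 + 4 + 13 = 21$ and $G$ is not reductive. I would use these unipotent substitutions as Tschirnhaus transformations to normalize the equation---removing the $z$-linear terms by completing the square and, generically, the $y^4$-coefficient of $f_{10}$---thereby reducing the action to that of the reductive subgroup generated by $\mathrm{GL}(2)$ and the two scalings, without changing the birational type of the quotient. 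The one-parameter subgroup $x_i \mapsto t x_i$, $y \mapsto t^2 y$, $z \mapsto t^5 z$ acts trivially on $\mathbb{P}^{48}$, and a surface of general type has finite automorphism group, so the generic orbit has dimension $21 - 1 = 20$ and $\dim M_{(1,2,0)} = 48 - 20 = 28$. The quotient is projective, hence a compactification, and the dominant rational quotient map from $\mathbb{P}^{48}$ exhibits $\overline{M}_{(1,2,0)}$ as unirational.

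The main obstacle is the first step---proving that the canonical ring has exactly this presentation. This requires controlling the single base point of the pencil $|K_S|$ (forced since $K_S^2 = 1$ is odd, so two general members meet in one point), establishing surjectivity of the multiplication maps degree by degree up to the first relation, and then ruling out further generators and relations by matching the Hilbert series $\sum_n h^0(nK_S)\, t^n = (1 - t^{10})/\big((1-t)^2(1-t^2)(1-t^5)\big)$. A secondary point is the GIT analysis: identifying the locus of hypersurfaces with only rational double points as a dense open subset of the stable locus, and checking that the reduction to a reductive group leaves the birational type of $\overline{M}_{(1,2,0)}$ unchanged.
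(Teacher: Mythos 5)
Your outline coincides, in its essential mechanism, with the paper's proof: realize the canonical model as a degree-$10$ hypersurface in $\mathbb{P}(1,1,2,5)$, use the unipotent part of $\mathrm{Aut}(\mathbb{P}(1,1,2,5))$ to normalize the equation so that only a reductive group still acts, take a GIT quotient, and read off irreducibility, the dimension ($48-20=28$ in your count, $31-3=28$ in the paper's, after it passes to the coefficient space $\mathbb{P}(4^5,6^7,8^9,10^{11})$ with its linearized $SL(2,\mathbb{C})$-action), and unirationality from the dominant map out of a projective space. Two remarks on emphasis: the presentation of the canonical ring, which you flag as ``the main obstacle,'' is not re-proved in the paper but cited (Catanese, Example 1.3), and the fact that isomorphisms of such hypersurfaces are induced by automorphisms of $\mathbb{P}(1,1,2,5)$ is a short lemma via the low-degree pluricanonical systems; neither is where the real work lies.

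The genuine gap is in what you call a ``secondary point.'' For $\overline{M}_{(1,2,0)}$ to be a \emph{compactification of the moduli space} --- not merely a projective unirational variety birational to a quotient --- you must show that \emph{every} point of the parameter space corresponding to a surface with at worst canonical singularities is GIT-\emph{stable} for the linearized reductive action, so that on that locus the quotient is geometric and its points are exactly the isomorphism classes. Your claims that the normalization leaves ``the birational type of the quotient unchanged'' and that the RDP locus is ``a dense open subset of the stable locus'' are too weak: a priori, stability could fail on a closed subset of the RDP locus, and those surfaces would then be discarded or have their orbits identified with others, so $M_{(1,2,0)}$ would not embed as an open subset of the quotient. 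This verification is precisely the content of the paper's Section 4: a Hilbert--Mumford analysis showing first that a point is stable whenever some $q_k$ (in the normal form $w^2 - z^5 - \sum_{k=0}^{3} q_k(x,y)z^k$) has all linear factors of multiplicity $< 5-k$, and second that every canonical surface satisfies this, because canonical singularities force $\mathrm{mult}_{[x_0:y_0:0]}\bigl(z^5 + \sum_k q_k(x,y)z^k\bigr) \leq 3$ at every point of $\mathbb{P}(1,1,2)$, which rules out the destabilizing high-multiplicity common factors. For the same reason your hedge ``generically, the $y^4$-coefficient'' must be removed: the normalization has to work for \emph{all} canonical surfaces, and it does, since the coefficient of the fifth power of the degree-$2$ variable is always nonzero for a canonical surface (the paper's Lemma 3, quoted from earlier work); otherwise only a dense open part of the moduli space, rather than all of it, would be captured by the quotient.
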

Understanding this moduli space allows for a more in depth study fibrations whose general fiber is a surface of type $(K_S^2, p_g) = (1,2)$. This would provide explicit examples and further means to study general type varieties that are fibered by such surfaces.

The paper is structured as follows. Section 2 reviews over background material of minimal surfaces with $K_S^2 = 1$ and $p_g = 2$ and discusses related moduli problems to the the moduli space of such surfaces. Section 3 sets up a parameter space of isomophism classes of surfaces of type $K_S^2 = 1$ and $p_g = 2$ and show that there is a linearized $SL(2,\mathbb{C})$ action upon the parameter space. Section 4 concludes by giving an anylysis of the (semi-)stable locus of the action given in the previous section and shows that points corresponding to canonical surfaces are stable. The main theorem then follows from standard results in Geometric Invariant Theory. 

\section{Background and Related Problems}

\subsection{Explicit description of Canonical Surface of $K_S^2 = 1$ and $p_g(S) = 2$}
Minimal surfaces of type $K_S^2 = 1$ and $p_g = 2$ are well known as the exception to Bombieri's theorem, \cite[Theorem 1.1]{Catanese87}, where it's third and fourth canonical map are not birational. This is due to the slow growth of the canonical ring and, as a result, leads to the following explicit description of it's canonical model. 

\begin{proposition}[{\cite[Example 1.3]{Catanese87}} ]
\label{form1}
Let $S$ be a minimal surface of general type with $K_S^2 = 1$ and $p_g = 2$, then the canonical model of $S$ can be realized as a hypersurface in $\mathbb{P}(1,1,2,5)$ defined by the degree 10 weighted homogeneous polynomial  $w^2 - F_{10}(x,y,z)$ with at worst canonical singularities.
\end{proposition}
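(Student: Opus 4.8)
The plan is to reconstruct the canonical model as $X = \mathrm{Proj}\,R(S)$, where $R(S) = \bigoplus_{n\ge0} H^0(S, nK_S)$ is the canonical ring, by determining its generators and relations explicitly. First I would record the numerics. Since $q(S)=0$ we have $\chi(\co_S)=1+p_g=3$, and by Mumford vanishing $H^i(S,nK_S)=0$ for $i>0$ and $n\ge2$; Riemann--Roch then gives $P_n := h^0(nK_S) = 3+\binom{n}{2}$ for $n\ge2$, so that $(P_0,P_1,P_2,P_3,P_4,P_5)=(1,2,4,6,9,13)$. Fixing a basis $x,y$ of the pencil $H^0(K_S)$, the associated map $S\dashrightarrow\mathbb P^1$ has general fibre a smooth curve $C\in|K_S|$ of genus $2$, since by adjunction $2g(C)-2 = C\cdot(C+K_S)=2K_S^2$.

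The core of the argument is to produce generators in weights $1,1,2,5$. The multiplication $\mathrm{Sym}^2 H^0(K_S)\to H^0(2K_S)$ is injective (a binary quadratic relation in $x,y$ would force the nonconstant function $x/y$ to be algebraic over $\mathbb C$), so its $3$-dimensional image misses one dimension of $H^0(2K_S)$ and I introduce a new generator $z$ in weight $2$. Comparing $P_3=6$ and $P_4=9$ with the number of monomials in $x,y,z$ of weighted degrees $3$ and $4$ (namely $4+2$ and $5+3+1$) shows no new generators are needed there, while in weight $5$ the monomials in $x,y,z$ span only a $12$-dimensional space against $P_5=13$, forcing one further generator $w$ of weight $5$. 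To prove that $x,y,z,w$ generate $R(S)$ with no further additions, I would restrict to the general genus-$2$ curve $C$: writing $L := K_S|_C$, adjunction gives $L^{\otimes2}=K_C$, so $L$ is a degree-$1$ odd theta characteristic with $h^0(L)=1$. Twisting $0\to\co_S(-C)\to\co_S\to\co_C\to0$ by $nK_S$ and using $C\equiv K_S$ together with the vanishing above yields the short exact sequences
\[
0 \to H^0((n-1)K_S) \to H^0(nK_S) \to H^0(C, L^{\otimes n}) \to 0,
\]
which are exact on the right for all $n\ge1$. These reduce the generation of $R(S)$ to that of the theta-characteristic ring $R(C,L)=\bigoplus_n H^0(C,L^{\otimes n})$; for a genus-$2$ curve the latter is classically generated in weights $1,2,5$ with a single relation in weight $10$. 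An inductive lift through the sequences then transfers generation to $S$.

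It remains to pin down the relation. In weight $10$ one has $P_{10}=3+\binom{10}{2}=48$, whereas the monomials in $x,y,z,w$ of weighted degree $10$ number $49$; hence the kernel of $\mathbb C[x,y,z,w]_{10}\to H^0(10K_S)$ is exactly one-dimensional. The unique relation must involve $w^2$ with nonzero coefficient, since $w$ lies outside the subring generated by $x,y,z$ (it restricts to the extra odd generator of $R(C,L)$, whose square falls back into the even part). After rescaling and completing the square $w\mapsto w+\tfrac12 G_5(x,y,z)$ to absorb the term linear in $w$, the relation takes the form $w^2 = F_{10}(x,y,z)$ with $\deg F_{10}=10$. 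Thus $R(S)\cong\mathbb C[x,y,z,w]/(w^2-F_{10})$ and $X=\mathrm{Proj}\,R(S)$ is the hypersurface $\{w^2=F_{10}(x,y,z)\}\subset\mathbb P(1,1,2,5)$.

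Finally, because $R(S)$ is the canonical ring of a minimal surface of general type and $K_S$ is nef and big, $X$ is the canonical model of $S$; the singularities introduced by contracting the $(-2)$-curves of the minimal model are rational double points, i.e.\ at worst canonical. I expect the main obstacle to be the generation statement of the second step — upgrading the numerical coincidences into an actual proof that $x,y,z,w$ generate and that no relations appear below weight $10$ — which is precisely where the reduction to the theta-characteristic ring $R(C,L)$ and the right-exactness of the restriction sequences carry the weight.
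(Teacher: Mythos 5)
The paper does not prove this proposition at all --- it is quoted directly from \cite[Example 1.3]{Catanese87} --- and your reconstruction follows exactly the standard canonical-ring argument underlying that citation: compute the plurigenera $P_n = 3 + \binom{n}{2}$, produce generators in weights $1,1,2,5$ by reducing generation to the theta-characteristic ring $R(C,L)$ of a general canonical curve via the right-exact restriction sequences, and locate the unique relation in weight $10$. The outline is correct; the two points you would still need to nail down are the fact $q(S)=0$ (not a hypothesis of the statement, but a theorem for these invariants, cf.\ \cite[Exercise X.13.5]{Beauville96}, and needed both for $\chi(\mathcal{O}_S)=3$ and for right-exactness at $n=1$), and that the single degree-$10$ relation generates the entire ideal of relations --- not merely that the kernel in degree $10$ is one-dimensional --- which follows by the standard Hilbert-series comparison or by noting that a surjection of three-dimensional domains $\mathbb{C}[x,y,z,w]/(w^2-F_{10}) \twoheadrightarrow R(S)$ must be injective.
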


This sets up our convention for this paper where $S$ will be a surface defined by:
\[
S := Z(w^2 - F_{10}(x,y,z)) \subset \mathbb{P}(1,1,2,5)
\]
where $x,y$ is of degree 1, $z$ is degree 2, $w$ is degree 5 and $F_{10}$ a degree 10 weighted homogeneous polynomial. We will frequently write $F_{10}$ graded by $z$ with the form:
\[
F_{10}(x,y,z) = \sum_{i = 0}^5 q_i(x,y)z^i
\]
where $q_i(x,y) \in \mathbb{C}[x,y]_{10- 2i}$ are homogenous polynomial of degree $10-2i$.

\subsection{Related Moduli Problems}
\subsubsection{Moduli of Canonical Surfaces of type $K_S^2 = 1$ and $\chi(S) = 3$}

As $\chi(S) = 1 - q(S) + p_g(S)$, it should be expected that the moduli space of surfaces of type $K_S^2 = 1$ and $p_g(S) = 2$ contain components for various $q_(S)$ and as a result for various $\chi(S)$ also. This will turn out to not be the case due to the fact that from \cite[Exercise X.13.5]{Beauville96} the assumption of $K_S^2 = 1$ and $p_g(S) = 2$ implies that $q(S) = 0$. Thus, the moduli space in question that is being studied in this paper is $M_{(1,2,0)}$.

\subsubsection{$M_{(1,2,0)}$ and Moduli of Genus $2$ Fibrations on Surfaces}

It is known that, for canonical surfaces of type $K_S^2 = 1$ and $p_g(S) = 2$, the canonical map $|K_S|$ has a unique base point that when resolved produces a surface with a genus $2$ fibration over $\mathbb{P}^1$. Furthermore, from \cite[Lemma 2.1]{CatanesePignatelli2006}, we see that these surfaces are the only surfaces which are relatively minimal but not absolutely minimal. Thus, the moduli problem of these non-minimal surfaces that admit a relatively minimal genus $2$ fibrations is equivalent to the moduli problem of minimal surfaces, $S$, with $K_S = 1$ and $p_g(S) = 2$.

\subsubsection{Non-reductive GIT quotient}

The approach of this paper is to reduce the problem into a situation where we can apply and take GIT quotients under a reductive group action. It is also possible to approach the same moduli problem by taking a non-reductive GIT quotient and references can be made to \cite{Kirwan09} for the relevant non-reductive GIT quotient.

\section{A Parameter Space of $S$ and Setting Up GIT}
This section simplfies the moduli problem by finding a unique surface in the isomophism class of $S$ which can be paramaterized by a space which admits an action by a reductive group. For the rest of the paper, our surfaces are of the form in proposition \ref{form1}, and is a hypersurface in $\mathbb{P}(1,1,2,5)$ defined by $w^2 - F_{10}(x,y,z)$, a weight homogneous polynomial of degree $10$. We then have the following properties of such surfaces and isomorphisms between surfaces.

\begin{lemma}[{\cite[Prop. 5, Lemma 12]{Wen21}}]
\label{initialForm}
Let $S$ be as above and assume that $S$ is surface with at worst canonical singularities, then:
\begin{itemize}
	\item $q_5 \neq 0$
	\item $q_0(x,y) \neq 0$ or $q_1(x,y) \neq 0$
	\item $\displaystyle \prod_{i = 0}^5 q_i(x,y)$ has at least $2$ linear factors.
\end{itemize}
\end{lemma}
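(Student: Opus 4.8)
The three assertions are necessary conditions on $F_{10}$, so for each I would argue by contraposition: assuming the condition fails, I exhibit a singularity of $S$ that is worse than canonical. The unifying framework is that $S=\{w^2=F_{10}\}$ is the double cover $\pi\colon S\to \Sigma:=\mathbb{P}(1,1,2)$ branched along $B:=\{F_{10}=0\}\in|\mathcal{O}_\Sigma(10)|$, so the singularities of $S$ are controlled by those of the pair $(\Sigma,\tfrac12 B)$; away from the orbifold locus $S$ is canonical exactly where $B$ has at worst simple (ADE) singularities, and this is supplemented by a local analysis at the two cyclic quotient points $P_z=[0:0:1:0]$ and $P_w=[0:0:0:1]$ of $\mathbb{P}(1,1,2,5)$. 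The $w^2$ term immediately forces $P_w\notin S$, so the $\mu_5$-point is harmless and only $P_z$ and $B$ require attention.

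For the first bullet I would localize at the $\tfrac12(1,1,1)$-point $P_z$, working in the chart $z=1$ where the residual $\mu_2$ acts by $(x,y,w)\mapsto(-x,-y,-w)$. Since $q_i(0,0)=0$ for $i<5$, near $P_z$ the surface is $\{w^2=q_5+(\text{higher order})\}$: if $q_5\neq 0$ then $P_z\notin S$ and the two sheets $w=\pm\sqrt{q_5}$ avoid the fixed locus, so $S$ is smooth there. If instead $q_5=0$ the equation becomes $w^2=q_4(x,y)+\cdots$ with $q_4$ a binary quadratic; taking $q_4$ nondegenerate and passing to the $A_1$-model $\{w^2=q_4\}\cong \mathbb{C}^2_{u,v}/\{\pm1\}$, the ambient $\mu_2$ lifts to the scalar $i$ on $(u,v)$, so the local model of $S$ is $\mathbb{C}^2/\mu_4$ with scalar action, i.e. the $\tfrac14(1,1)$ singularity. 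Its discrepancy is $\tfrac{2}{4}-1<0$, so it is log terminal but not canonical; a degenerate $q_4$ only makes things worse. Hence $q_5\neq 0$.

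For the second bullet, if $q_0=q_1=0$ then $F_{10}=z^2G$ with $G=q_2+q_3z+q_4z^2+q_5z^3$, and computing the partials of $w^2-z^2G$ shows they all vanish identically along the curve $\{z=w=0\}\cong\mathbb{P}^1$. Thus $S$ is singular in codimension one, hence non-normal, and cannot be canonical; equivalently $z^2\mid F_{10}$ makes the branch divisor non-reduced along $\{z=0\}$. This forces $q_0\neq 0$ or $q_1\neq 0$.

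The third bullet is the main obstacle, both to read correctly and to verify. Failure of ``$\prod_i q_i$ has at least two linear factors'' means that the product of the nonzero $q_i$ is, up to scalar, a power of a single linear form $\ell_0$; after an $\mathrm{SL}_2$ change of $x,y$ I may take $\ell_0=x$, so that $x\mid q_i$ for every $i\le 4$ while $q_5=c_5\neq 0$ by the first bullet. I then study $S$ at the smooth point $Q=[0:1:0:0]$ in the chart $y=1$, where $B=\{g=0\}$ for $g(x,z)=F_{10}(x,1,z)=\sum_i c_i x^{10-2i}z^i$. This $g$ is weighted homogeneous of weight $10$ for $\mathrm{wt}(x)=1,\ \mathrm{wt}(z)=2$, and since $c_5\neq 0$ its lowest-degree term is $c_5z^5$, so $B$ has multiplicity five at $Q$; concretely $g=x^{10}P(z/x^2)$ with $\deg P=5$ exhibits five parabolic branches $z=r_jx^2$ mutually tangent along $\{z=0\}$. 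A plane-curve singularity of multiplicity $\ge 4$ is never simple, so the double point $\{w^2=g\}$ is not a rational double point and $S$ fails to be canonical over $Q$. The delicate points are fixing the worst-case configuration and checking that the multiplicity remains five in every sub-case—whether or not $P$ has repeated roots or some individual $q_i$ vanish—which holds because the term $c_5z^5$ always survives and pins the multiplicity.
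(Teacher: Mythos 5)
The paper itself contains no proof of this lemma---it is imported wholesale from \cite[Prop.~5, Lemma~12]{Wen21}---so your argument has to stand on its own, and most of it does. The double-cover frame (branch divisor $B=\{F_{10}=0\}$, the $\tfrac12(1,1,1)$ point $P_z$, and $P_w\notin S$) is the right one. Your second bullet is correct: $q_0=q_1=0$ makes $S$ singular along the curve $\{z=w=0\}$, hence non-normal, hence not canonical. Your third bullet is also correct and clean: after normalizing the single linear factor to $x$, the branch curve $g(x,z)=F_{10}(x,1,z)$ has multiplicity exactly $5$ at the smooth ambient point $[0:1:0:0]$ because the monomial $c_5z^5$ survives (no cancellation is possible among distinct monomials), and a double cover branched along a plane-curve singularity of multiplicity $\geq 4$ is never a rational double point.

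The gap is in the first bullet, in the sentence ``a degenerate $q_4$ only makes things worse.'' That monotonicity principle is false exactly where you need it. Take $q_5=0$, $q_4=x^2$, $q_3=y^4$, $q_0=y^{10}$ (so bullet two is not violated): near $P_z$ the double cover is $\tilde w^2=\tilde x^2+\tilde y^4+(\text{terms of degree}\geq 6)$, which by finite determinacy is an $A_3$ point---a \emph{canonical} singularity, i.e.\ better, not worse, than your nondegenerate $A_1$ model. So non-canonicity of $S$ at $P_z$ cannot be inherited from the cover; the $\mu_2$-quotient must be recomputed. (It turns out to be the cyclic quotient $\tfrac18(1,3)$, which is not canonical because the group is small but not contained in $SL(2,\mathbb{C})$---but that is a genuinely new computation, and the remaining configurations of rank-one $q_4$, as well as $q_4=0$, each still need an argument.) Two uniform repairs: (i) semicontinuity---perturb $q_4$ equivariantly to a nondegenerate form; the nearby fibers of the resulting flat family of $\mu_2$-quotients have your $\tfrac14(1,1)$ point, and since small deformations of Du Val singularities are Du Val, the special fiber cannot be canonical; or (ii), cleaner and covering every case at once: whenever $q_5=0$ the deck involution $(x,y,w)\mapsto(-x,-y,-w)$ multiplies the adjunction generator $dx\wedge dy/w$ of $\omega_{\tilde S}$ by $-1$; since $\tilde S\to S$ is \'etale in codimension one and a canonical surface germ is Gorenstein, a canonical $S$ would force a $\mu_2$-invariant generator of $\omega_{\tilde S}$, which is impossible because every generator is a unit times $dx\wedge dy/w$ and the eigenvalue at the fixed point is $-1$. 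Combined with the standard descent facts (a quotient \'etale in codimension one of a non-canonical, resp.\ non-normal, germ is non-canonical, resp.\ non-normal), repair (ii) is what should replace your second paragraph.
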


\begin{lemma}[{cf. \cite[Prop. 3]{Wen21}}]
\label{form}
Let $S$ and $S'$ be isomorphic surfaces with the form above defined in $\mathbb{P}(1,1,2,5)$ then the isomorphism $\phi:S \rightarrow S'$ is induced by an automorphism of $\mathbb{P}(1,1,2,5)$.
\end{lemma}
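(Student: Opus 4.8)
The plan is to show that the isomorphism $\phi$ necessarily respects the canonical rings of the two surfaces, and then that any graded isomorphism of these rings is realized by a coordinate change of $\mathbb{P}(1,1,2,5)$. Since $S$ and $S'$ have at worst canonical singularities, they are the canonical models of their minimal resolutions, and each is recovered as $\operatorname{Proj}$ of its canonical ring $R(S) = \bigoplus_{n \geq 0} H^0(S, nK_S)$; the given embedding in $\mathbb{P}(1,1,2,5)$ is precisely the one associated to a choice of generators of $R(S)$. An isomorphism of varieties carries the dualizing sheaf to the dualizing sheaf, so $\phi^\ast \omega_{S'} \cong \omega_S$, and hence $\phi$ induces a graded ring isomorphism $\Phi \colon R(S') \xrightarrow{\sim} R(S)$. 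It therefore suffices to understand the graded automorphisms of this ring.

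First I would pin down the generators degree by degree using Riemann--Roch. With $\chi(\mathcal{O}_S) = 1 - q(S) + p_g(S) = 3$ and $K_S^2 = 1$, one has $h^0(nK_S) = 3 + \binom{n}{2}$ for $n \geq 2$. Thus $H^0(K_S)$ is two-dimensional, spanned by $x, y$; the degree-two piece has dimension $4 = 3 + 1$, accounted for by $x^2, xy, y^2$ together with one new generator $z$; and the degree-five piece has dimension $13$, of which $12$ are monomials in $x, y, z$ of weighted degree five and one is the new generator $w$. This reproduces the weights $(1,1,2,5)$ and the relation $w^2 = F_{10}(x,y,z)$.

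Next I would read off the shape of $\Phi$ from this generation pattern. On the degree-one part $\Phi$ restricts to a linear isomorphism $\langle x', y' \rangle \to \langle x, y \rangle$, i.e. an element of $\GL(2,\mathbb{C})$. Consequently $\Phi$ carries $\operatorname{Sym}^2\langle x', y'\rangle$ onto $\operatorname{Sym}^2\langle x, y\rangle$, so the new degree-two generator satisfies $\Phi(z') = \lambda z + q(x,y)$ with $q$ a binary quadratic, and similarly $\Phi(w') = \mu w + g(x,y,z)$ with $g$ of weighted degree five. Each such assignment is exactly a graded coordinate change of the polynomial ring $\mathbb{C}[x,y,z,w]$, and the group of these is precisely $\operatorname{Aut}(\mathbb{P}(1,1,2,5))$; hence $\phi$ is induced by an ambient automorphism.

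The main point to verify carefully is that $\lambda \neq 0$ and $\mu \neq 0$, i.e. that the new generators are not sent into the subring generated in lower degrees. This follows because $\Phi$ must induce an isomorphism on the spaces of minimal generators $R_n / (R_{<n} \cdot R)_n$, which are one-dimensional in degrees two and five; were $\mu = 0$, the image $\Phi(w')$ would be decomposable and $\Phi$ would fail to be surjective in degree five, contradicting that it is an isomorphism. A secondary technical point is the identification of $\operatorname{Aut}(\mathbb{P}(1,1,2,5))$ with graded coordinate changes of $\mathbb{C}[x,y,z,w]$; this is standard for well-formed weighted projective spaces, and I would cite it rather than reprove it.
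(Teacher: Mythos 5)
Your proof is correct, and it reaches the conclusion by a genuinely different middle step than the paper. Both arguments begin the same way: the isomorphism $\phi$ respects pluricanonical systems and so induces a graded isomorphism of canonical rings. But the paper then transfers everything to the ambient ring at once: since $S$ has degree $10$, the restriction maps $H^0(\mathbb{P}(1,1,2,5),\mathcal{O}(m)) \to H^0(S,\mathcal{O}(m))$ are isomorphisms for $m \leq 9$, so the canonical-ring isomorphism gives linear isomorphisms of the ambient graded pieces in degrees $\leq 9$; because $\mathbb{C}[x,y,z,w]$ is generated in degrees $\leq 5$, these extend to a graded automorphism of the whole polynomial ring, hence an automorphism of $\mathbb{P}(1,1,2,5)$ --- and invertibility (your $\lambda \neq 0$, $\mu \neq 0$ issue) comes for free from applying the same construction to $\phi^{-1}$. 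You instead analyze the canonical ring intrinsically: Riemann--Roch dimension counts identify the minimal generators in degrees $1,2,5$, the graded isomorphism is then forced into triangular form $\Phi(z') = \lambda z + q(x,y)$, $\Phi(w') = \mu w + g(x,y,z)$, and non-degeneracy of the leading coefficients is checked by a surjectivity count in degrees $2$ and $5$. What your route buys is that it already delivers the explicit triangular shape of the ambient automorphism, which the paper only extracts in the \emph{next} lemma (the one reducing isomorphisms of surfaces in normal form to a unique $A \in GL(2,\mathbb{C})$); what the paper's route buys is brevity and no generator chasing. Two small caveats in your write-up: your computation $\chi(\mathcal{O}_S) = 3$ silently uses $q(S) = 0$, which is not part of the hypotheses and is justified in the paper by citing Beauville; and the Riemann--Roch detour is not strictly needed, since the hypothesis already presents $R(S)$ as $\mathbb{C}[x,y,z,w]/(w^2 - F_{10})$, from which the graded dimensions and independence of monomials can be read off directly. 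Neither affects correctness.
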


\begin{proof}
The isomorphism $\phi: S \rightarrow S'$ induces an isomorphism for each $k \in \mathbb{Z}_{\geq 0}$,
\[
\phi_k: H^0(S, \omega_S^{\otimes k}) \rightarrow H^0(S', \omega_{S'}^{\otimes k})
\]
Furthermore, $H^0(S, \omega_S^{\otimes m}) \cong H^0(S,\mathcal{O}(m))$ and since $S$ is degree $10$, then for each $m \leq 9$ we get:
\[
H^0(\mathbb{P}(1,1,2,5), \mathcal{O}(m)) \xrightarrow{\sim} H^0(S, \mathcal{O}(m))
\]
The same is also true for $S'$. In particular for $0 \leq k = m \leq 9$, $\phi_k$ induces the morphism:
\[
\phi_k: H^0(\mathbb{P}(1,1,2,5), \mathcal{O}(k)) \rightarrow H^0(\mathbb{P}(1,1,2,5), \mathcal{O}(k))
\]
but these are the first $9$ graded parts of the graded ring defining $\mathbb{P}(1,1,2,5)$ since
\[
\mathbb{C}[x,y,z,w] = \bigoplus_{m = 0}^\infty H^0(\mathbb{P}, \mathcal{O}(m))
\]
Furthermore, we know the associated graded ring is generated in degree $5$. This implies that $\phi_k$, for $0 \leq k = m \leq 9$, extends to an automorphism of the graded ring:
\[
\phi_\infty: \bigoplus_{m = 0}^\infty H^0(\mathbb{P}(1,1,2,5), \mathcal{O}(m)) \rightarrow \bigoplus_{m = 0}^\infty H^0(\mathbb{P}(1,1,2,5), \mathcal{O}(m))
\]
which corresponds to an automorphism of $\mathbb{P}(1,1,2,5)$.
\end{proof}

\begin{lemma}
\label{finalForm}
Let $S$ be a canonical surface of the form:
\[
S := Z(w^2 - F_{10}(x,y,z)) \subset \mathbb{P}(1,1,2,5)
\]
then $S$ is isomorphic to an canonical surface, $\hat{S}$, define by the equation:
\[
w^2 - z^5 - \sum_{i = 0}^3 q_i(x,y)z^i
\]
where $q_i(x,y) \in \mathbb{C}[x,y]$ are homogeneous of degree $10-2i$.
\end{lemma}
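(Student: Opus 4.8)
The plan is to exhibit a single explicit automorphism of the weighted projective space $\mathbb{P}(1,1,2,5)$ that carries $S$ onto a surface in the desired normal form; its restriction to $S$ then supplies the isomorphism $\phi\colon S \to \hat S$. The graded automorphisms of $\mathbb{P}(1,1,2,5)$ are precisely the weight-preserving automorphisms of $\mathbb{C}[x,y,z,w]$, and the two degrees of freedom relevant here are the shift $z \mapsto z + Q(x,y)$ by an arbitrary binary quadratic form $Q$ (the degree-$2$ monomials other than $z$ being $x^2,xy,y^2$) and the rescaling $w \mapsto \mu w$ with $\mu \in \mathbb{C}^\times$. I would use the first to remove the $z^4$ term, and the second together with an overall scaling of the equation to normalize the leading coefficient in $z$.

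First I would perform the substitution $z \mapsto z + Q(x,y)$ in $F_{10} = \sum_{i=0}^5 q_i z^i$ and read off the coefficient of $z^4$: only the $i=4$ and $i=5$ summands contribute, giving $5q_5 Q + q_4$. By Lemma \ref{initialForm} the leading coefficient $q_5$ is a nonzero constant, so $Q := -q_4/(5q_5)$ is a genuine binary quadratic form and the assignment is a legitimate degree-preserving automorphism. With this choice the $z^4$ term vanishes identically, while the coefficients of $z^0,\dots,z^3$ are altered but stay homogeneous of the correct degrees $10,8,6,4$, and the coefficient of $z^5$ remains equal to $q_5$.

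After this shift the defining equation reads $w^2 - q_5 z^5 - \sum_{i=0}^3 q_i'(x,y) z^i$. To clear the leading coefficient I would rescale $w \mapsto \mu w$ with $\mu$ a square root of $q_5$ (available over $\mathbb{C}$), turning the equation into $q_5\bigl(w^2 - z^5 - \sum_{i=0}^3 (q_i'/q_5)\, z^i\bigr)$; since the defining ideal is unchanged under multiplication by the nonzero scalar $q_5$, this is exactly a surface $\hat S$ of the claimed form. Finally, because $\hat S$ is the image of $S$ under an automorphism of the ambient space it is abstractly isomorphic to $S$, hence it too has at worst canonical singularities. The argument is essentially routine; the one point requiring care, and the step on which everything hinges, is the availability of the shift $Q = -q_4/(5q_5)$, which is precisely where the hypothesis $q_5 \neq 0$ from Lemma \ref{initialForm} is indispensable.
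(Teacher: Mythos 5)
Your proof is correct and takes essentially the same approach as the paper: invoke $q_5 \neq 0$ from Lemma \ref{initialForm}, shift $z$ by a binary quadratic form proportional to $q_4$ to kill the $z^4$ term, and then normalize the leading coefficient by a further ambient automorphism of $\mathbb{P}(1,1,2,5)$. The only (immaterial) difference is that the paper rescales $z$ by a fifth root of $q_5$ while you rescale $w$ by a square root of $q_5$ and absorb the overall factor into the defining equation; in fact your shift constant $Q = -q_4/(5q_5)$ is the correct one, whereas the paper's stated constant $\zeta_8/(5q_5)$ does not actually annihilate the $z^4$ coefficient and appears to be a slip.
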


\begin{proof}
Let $\displaystyle F_{10}(x,y,z) = \sum_{i = 0}^5 q_i(x,y)z^i$, then under the assumptions, we are in the situation of lemma \ref{initialForm} so $q_5(x,y) \neq 0$. We apply the following automorphism of $\mathbb{P}(1,1,2,5)$ defined by the action:
\begin{align*}
w &\mapsto w\\
z &\mapsto \frac{1}{\sqrt[5]{q_5(x,y)}} z + \frac{\zeta_8}{5q_5(x,y)} q_4(x,y)\\
x &\mapsto x\\
y &\mapsto y 
\end{align*}
where $\zeta_8 = e^{\frac{\pi i}{4}}$. The resulting surface will have $q_4(x,y) = 0$, which is the form described as $\hat{S}$.
\end{proof}

\begin{definition}
We say a canonical surface $S \subset \mathbb{P}(1,1,2,5)$ is in normal form if it is defined by the equation with the form:
\[
w^2 - z^5 - \sum_{i = 0}^3 q_i(x,y)z^i
\]
as described above.
\end{definition}

\begin{lemma}
Let $S$ and $\tilde{S}$ be surfaces in normal form and let $\phi: S \rightarrow \tilde{S}$ be an isomorphism. Then $\phi$ can be realized as an action of some unique $A \in GL(2,\mathbb{C})$ acting on the degree 1 graded part of the graded rings associated to $S$ and $\tilde{S}$.
\end{lemma}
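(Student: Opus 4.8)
The strategy is to combine the rigidity supplied by Lemma~\ref{form} with a coefficient comparison that exploits the two distinguishing features of the normal form: the normalization of the leading $z^5$- and $w^2$-coefficients, and the vanishing of the $z^4$-term.

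First I would invoke Lemma~\ref{form} to present $\phi$ as the restriction of an automorphism $\Phi$ of $\mathbb{P}(1,1,2,5)$, i.e.\ of a graded automorphism $\Phi^\ast$ of $\mathbb{C}[x,y,z,w]$. Reading off the weighted degrees, $\Phi^\ast$ must act by
\[
x \mapsto \ell_1(x,y),\quad y \mapsto \ell_2(x,y),\quad z \mapsto \lambda z + c(x,y),\quad w \mapsto \mu w + d(x,y,z),
\]
where $(\ell_1,\ell_2)=A\cdot(x,y)$ for a matrix $A\in GL(2,\mathbb{C})$, $c$ is homogeneous of degree $2$ in $x,y$, $d$ is weighted homogeneous of degree $5$, and $\lambda,\mu\in\mathbb{C}^\ast$; this is the general shape because the degree-$1$, degree-$2$, and degree-$5$ graded pieces of the ring are spanned by $\{x,y\}$, $\{x^2,xy,y^2,z\}$, and $\{w\}\cup\{x,y,z\text{-monomials of weighted degree }5\}$ respectively.

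Second, I would impose that $\Phi$ carries $\tilde S$ to $S$, i.e.\ that $\Phi^\ast$ sends the defining polynomial of $\tilde S$ to $\kappa$ times that of $S$ for some $\kappa\in\mathbb{C}^\ast$, and compare coefficients first in $w$ and then in $z$. The terms linear in $w$ on the left are $2\mu\,d\,w$, while the right-hand side has none, forcing $d=0$ and $\kappa=\mu^2$; thus $w\mapsto\mu w$. With $d=0$ the identity reduces to $\tilde G(\ell_1,\ell_2,\lambda z+c)=\mu^2 G(x,y,z)$, where $G=z^5+\sum_{i\le3}q_i z^i$ and $\tilde G=z^5+\sum_{i\le3}\tilde q_i z^i$. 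The coefficient of $z^4$ on the left is $5\lambda^4 c(x,y)$, absent on the right by the normal form, so $c=0$ and $z\mapsto\lambda z$; comparing $z^5$ then gives $\lambda^5=\mu^2$, and the lower coefficients give the compatibilities $\tilde q_i(A(x,y))=\mu^2\lambda^{-i}q_i(x,y)$. Consequently $\Phi$ is \emph{diagonal}: it is completely determined by the single matrix $A$ acting on the degree-$1$ part $\langle x,y\rangle$, with the scalars on $z$ and $w$ slaved to $A$ through $\lambda^5=\mu^2$. This already realizes $\phi$ as the action of an element of $GL(2,\mathbb{C})$ on the degree-$1$ graded parts.

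The delicate point, which I expect to be the main obstacle, is the \emph{uniqueness} of $A$. The automorphism $\Phi$ inducing $\phi$ is only well defined up to the irrelevant scaling $\sigma_t\colon(x,y,z,w)\mapsto(tx,ty,t^2z,t^5w)$, which acts trivially on $\mathbb{P}(1,1,2,5)$, and under $\sigma_t$ the data transforms as $(A,\lambda,\mu)\mapsto(tA,t^2\lambda,t^5\mu)$; one checks directly that both $\lambda^5=\mu^2$ and the compatibilities $\tilde q_i(A(x,y))=\mu^2\lambda^{-i}q_i$ are preserved, so the raw matrix $A$ is only determined up to scalar. To pin down a genuine single element I would pass to the intrinsic action of $\phi$ on $H^0(S,\omega_S)=\langle x,y\rangle$: since the normal form fixes the defining equation exactly, rather than merely up to scalar, the Poincar\'e residue $\mathrm{Res}(\Omega/f)$ gives a canonical generator of $\omega_S\otimes\mathcal{O}_S(-1)$, and a Jacobian computation ($\Phi^\ast\Omega=(\det A)\lambda\mu\,\Omega$, $\Phi^\ast\tilde f=\mu^2 f$) shows the induced matrix $\tfrac{\lambda\det A}{\mu}\,A$ of $\phi^\ast$ is $\sigma_t$-invariant. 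This $\sigma_t$-invariant combination is the unique $A\in GL(2,\mathbb{C})$ claimed; equivalently, one normalizes $\det A$, landing in $SL(2,\mathbb{C})$ up to its center, which is precisely the group used in the GIT set-up of the next section. Verifying that this normalization is well posed and recovers $\phi$ on the nose is the part requiring the most care.
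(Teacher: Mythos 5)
Your existence argument is sound and in fact more careful than the paper's own: the paper simply asserts $\alpha=\beta=1$ from the normalization of the $w^2$ and $z^5$ coefficients, silently taking the scalar $\kappa$ to be $1$ and ignoring the rescaling ambiguity, whereas you correctly track $\kappa=\mu^2=\lambda^5$ and the compatibilities $\tilde q_i(A(x,y))=\mu^2\lambda^{-i}q_i$. The problem is your uniqueness step. The $\sigma_t$-invariant matrix you extract from the Poincar\'e residue, $M=\tfrac{\lambda\det A}{\mu}A$, is indeed the intrinsic action of $\phi^*$ on $H^0(S,\omega_S)$, but it is \emph{not} the matrix that realizes $\phi$ in the sense of the lemma (i.e.\ such that the coordinate automorphism $(x,y)\mapsto A(x,y)$, $z\mapsto z$, $w\mapsto w$ restricts to $\phi$). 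The unique lift of $\Phi$ fixing $z$ and $w$ has matrix $A_0=\tfrac{\lambda^2}{\mu}A$, and $M=(\det A_0)\,A_0$, so the two differ by the factor $\det A_0$. Concretely: take $S=\tilde S$ with every $q_i$ even in $x$ (hence, by homogeneity of even degree, even in $y$ as well) and $\phi$ the reflection $(x,y,z,w)\mapsto(-x,y,z,w)$. Then $A_0=\mathrm{diag}(-1,1)$, while your recipe outputs $M=\mathrm{diag}(1,-1)$, whose coordinate action realizes $\phi$ composed with the deck involution $w\mapsto -w$, not $\phi$. In general the automorphism induced by $M$ need not even carry $S$ to $\tilde S$, since that requires $(\det A_0)^{10-2i}=1$ whenever $q_i\neq 0$. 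So the promised verification that your normalization ``recovers $\phi$ on the nose'' fails; the canonical-bundle action and the coordinate action on the degree-$1$ graded piece differ precisely by a determinant twist, and it is the coordinate action that feeds into the $GL(2,\mathbb{C})$-action on the coefficients used in the GIT construction.

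The fix is simpler than your residue detour and stays inside your own framework. You have the relation $\lambda^5=\mu^2$, and the weights of $z$ and $w$ are coprime, so the system $t^2\lambda=1$, $t^5\mu=1$ has a solution, explicitly $t=\lambda^2/\mu$ (check: $t^2\lambda=\lambda^5/\mu^2=1$ and $t^5\mu=\lambda^{10}/\mu^4=1$), and the solution is unique because $t^2=t^5=1$ forces $t=1$. Replacing $(A,\lambda,\mu)$ by $(tA,1,1)$ therefore produces the \emph{unique} graded-ring lift of $\Phi$ that fixes $z$ and $w$ exactly, and its matrix $A_0=tA$ is the unique $A\in GL(2,\mathbb{C})$ claimed by the lemma; uniqueness can equivalently be phrased as in the paper, by composing one realization with the inverse of another and using that the restriction $H^0(\mathbb{P},\mathcal{O}(1))\to H^0(S,\mathcal{O}(1))$ is an isomorphism. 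Note also that no normalization of $\det A$ is possible or needed: the reflection example above is a canonical surface whose automorphism genuinely requires $\det A_0=-1$, so ``landing in $SL(2,\mathbb{C})$'' is not an option, and indeed the paper only passes from $GL(2,\mathbb{C})$ to $PGL(2,\mathbb{C})$ and then to its double cover $SL(2,\mathbb{C})$ at the level of the induced action on the projectivized parameter space, not at the level of this lemma.
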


\begin{proof}
Following lemma \ref{form}, the isomorphism is realized as an automorphism of $\mathbb{P}(1,1,2,5)$ defined by
\begin{align*}
w &\mapsto \alpha w + p(x,y,z)\\
z &\mapsto \beta z + r(x,y)\\
x &\mapsto ax + by\\
y &\mapsto cx + dy 
\end{align*}
where $\alpha \in \mathbb{C}^*$, $p(x,y)$ is a degree $5$ homogeneous polynomial, $r(x,y)$ is a degree $2$ homogeneous polynomial and
\[
A = 
\begin{pmatrix}
a & b \\
c & d
\end{pmatrix}\in GL(2, \mathbb{C})
\]
In both defining equations, of the surfaces in normal form, that there is no $w$ and $z^4$ monomial terms and so $p(x,y) = 0$ and $r(x,y) = 0$. Furthermore since the coefficient of $w^2$ and $z^5$ is $1$, this implies that $\alpha = \beta = 1$. Thus any isomorphism of these surfaces in normal form are completely understood by a unique $A \in GL(2,\mathbb{C})$ acting on the degree $1$ components of the graded ring. Uniqueness comes from the fact that if there were two $A,B \in GL(2,\mathbb{C})$ then the action of $A$ composed with the inverse action of $B$ would give $AB^{-1} = I$ and so $A = B$.
\end{proof}

From the above results, the problem of explciitely describing the  moduli space of canonical surfaces with $K_S^2 = 1$ and $p_g(S) = 2$ can be reduced to studying the action of $GL(2,\mathbb{C})$  on the parameters which are coefficients of $q_i(x,y)$ of surfaces in normal form. This allows for a GIT quotient which will be computed in two step by taking intermediate geometric quotients to reduce the problem down to a case of an $SL(2,\mathbb{C})$ action. This can be seen in the next two propositions.

\begin{proposition}
We can parameterize canonical surfaces in normal form by an open subset, $U \subset \mathbb{C}^{32}$
\end{proposition}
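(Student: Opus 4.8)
The plan is to write the $32$ coordinates explicitly, realise $U$ as the locus of coefficients whose hypersurface has at worst canonical singularities, and then prove that this locus is open. First I would count the free coefficients. A surface in normal form is determined by the tuple $(q_0,q_1,q_2,q_3)$, where $q_i(x,y)$ is a homogeneous binary form of degree $10-2i$ and the coefficients of $z^4$ and $z^5$ are pinned down by the normal form. Since binary forms of degree $d$ form a $\mathbb{C}^{d+1}$, the form $q_i$ carries $11-2i$ coefficients, so the total is
\[
11+9+7+5 = 32 ,
\]
and the coefficient data is a point of $\mathbb{C}^{32}$. Conversely every point of $\mathbb{C}^{32}$ yields the degree-$10$ weighted homogeneous polynomial $w^2 - z^5 - \sum_{i=0}^3 q_i(x,y)z^i$, hence a hypersurface in $\mathbb{P}(1,1,2,5)$, and this produces a tautological flat family $\mathcal{S}\to\mathbb{C}^{32}$ whose fibre $\mathcal{S}_c$ over $c$ is the corresponding hypersurface.

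By Proposition \ref{form1}, $\mathcal{S}_c$ is the canonical model of a minimal surface of general type with $K_S^2=1$, $p_g=2$ exactly when it has at worst canonical singularities, so I would set
\[
U := \{\, c\in\mathbb{C}^{32} : \mathcal{S}_c \text{ has at worst canonical singularities} \,\} ,
\]
which is nonempty because the numerical conditions recorded in Lemma \ref{initialForm} hold for generic $c$. To analyse singularities I would first note that every fibre avoids the two singular points $[0:0:1:0]$ and $[0:0:0:1]$ of $\mathbb{P}(1,1,2,5)$: since each $q_i$ is homogeneous of positive degree we have $q_i(0,0)=0$, so the defining polynomial takes the values $-1$ and $1$ at these points and in particular does not vanish. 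Hence each $\mathcal{S}_c$ lies in the smooth locus of $\mathbb{P}(1,1,2,5)$, and its singularities may be tested inside a smooth ambient threefold, where on each affine chart they are the common zeros of the defining polynomial and its partial derivatives. For a surface hypersurface singularity in a smooth threefold, canonical coincides with rational double point, that is, with simple (ADE) singularity, so $U$ is precisely the set of $c$ for which every singular point of $\mathcal{S}_c$ is simple.

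The substantive step is the openness of this condition. The complement of $U$ is the set of $c$ for which some fibre point is non-isolated, has multiplicity at least $3$, or is an isolated singularity of positive modality, and the claim is that this set is Zariski-closed. This is exactly the semicontinuity that underlies the general openness of canonical singularities in flat families: a simple singularity can only deform to a simple singularity or a smooth point, whereas non-simple or non-isolated behaviour persists under specialisation. I expect this semicontinuity to be the main obstacle, because a priori the bad locus is only constructible; establishing that it is genuinely closed — via upper semicontinuity of the total Milnor number together with Arnol'd's modality — is what upgrades $U$ from constructible to open.
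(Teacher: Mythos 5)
Your proposal has the same skeleton as the paper's proof: the coefficient count $11+9+7+5=32$ identifying normal forms with points of $\mathbb{C}^{32}$, followed by the claim that the locus of coefficients whose hypersurface has at worst canonical singularities is Zariski open. The difference lies entirely in how that openness is handled. The paper disposes of it in one line by citing \cite[Remark 4.21]{KollarMori}, i.e.\ the general fact that having at worst canonical singularities is an open condition in a flat family. You instead reduce to the smooth locus of $\mathbb{P}(1,1,2,5)$ (a worthwhile verification the paper omits: every fibre misses $[0:0:1:0]$ and $[0:0:0:1]$ since the defining polynomial takes the values $-1$ and $1$ there), identify canonical surface singularities with Du Val/ADE points, and propose to prove openness by hand via Arnol'd's modality and semicontinuity of the total Milnor number. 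That route is viable --- simple singularities deform only to simple singularities, and a constructible set that is open in the Euclidean topology is Zariski open --- but as written it is a plan rather than a proof: you explicitly flag the closedness of the bad locus as the ``main obstacle'' without carrying it out, and completing it rigorously (constructibility of the bad locus, semicontinuity in the presence of non-isolated singularities) is substantially more work than the statement requires. Since you yourself name ``the general openness of canonical singularities in flat families'' as the underlying fact, the efficient fix is simply to invoke it, via \cite[Remark 4.21]{KollarMori} or Elkik's theorem that deformations of canonical singularities are canonical, which is exactly what the paper does. What your approach would buy, if completed, is a self-contained argument specific to surfaces, independent of the general theory; what the paper's citation buys is brevity and generality.
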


\begin{proof}
We consider the subspace of $H^0(\mathcal{O}_{\mathbb{P}(1,1,2,5)}(10))$ consisting of monomials that does not contain $w^2$, $w$, $z^4$ and with $z^5$ with coefficient $1$. This is exactly equations of the form:
\[
z^5 + \sum_{i = 0}^3 q_i(x,y)z^i
\]
where $q_k(x,y)$ are homogeneous polynomials of degree $10-2k$ and each choice will give a surface:
\[
w^2 - z^5 - \sum_{i = 0}^3 q_i(x,y)z^i
\]
with each $q_i$ having $11-2i$ degrees of freedom. This give a parameter space, $\mathbb{C}^{32}$, of such surfaces. Certainly, a general element of this paramater space is smooth, but as having at worst canonical singularities is also an open condition, from \cite[Remark 4.21]{KollarMori}, there is some open $U \subset \mathbb{CP}^{32}$ that parameterizes canonical surfaces in normal form.
\end{proof}

\begin{proposition}
The action of $GL(2,\mathbb{C})$ on the space of surfaces of normal form induces an action on $\mathbb{C}^{32}$ and, as a result, a linearized action of $SL(2,\mathbb{C})$ on the weighted projective space $\mathbb{P}(4^5, 6^7,8^9, 10^{11})$.
\end{proposition}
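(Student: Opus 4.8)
The plan is to make the induced $GL(2,\mathbb{C})$-action on $\mathbb{C}^{32}$ completely explicit, then factor $GL(2,\mathbb{C})$ as (center)$\,\cdot\,SL(2,\mathbb{C})$ and recognize the target weighted projective space as the geometric quotient by the center. By the previous lemma, every isomorphism of surfaces in normal form is the substitution $x\mapsto ax+by$, $y\mapsto cx+dy$, $z\mapsto z$, $w\mapsto w$ attached to a unique $A=\left(\begin{smallmatrix}a&b\\c&d\end{smallmatrix}\right)\in GL(2,\mathbb{C})$. First I would check that this substitution preserves normal form — the coefficients of $w^2$ and $z^5$ stay equal to $1$ and no $w$- or $z^4$-monomial is introduced — and that it sends each $q_i(x,y)$ to $q_i(ax+by,cx+dy)$, still homogeneous of degree $10-2i$. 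Since $\mathbb{C}^{32}$ records the coefficients of $q_0,\dots,q_3$ and, for fixed $A$, the substitution is linear in those coefficients, this defines a linear representation; concretely $\mathbb{C}^{32}\cong\bigoplus_{i=0}^{3}\mathbb{C}[x,y]_{10-2i}$, a direct sum of symmetric powers of the standard representation (precomposing with $A^{-1}$ so that it is a genuine left action).

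Next I would pin down the action of the center $Z=\{\lambda I:\lambda\in\mathbb{C}^*\}\cong\mathbb{C}^*$. As $q_i(\lambda x,\lambda y)=\lambda^{10-2i}q_i(x,y)$, the scalar $\lambda I$ multiplies the block of coefficients of $q_i$ by $\lambda^{10-2i}$: weight $10$ on the $11$ coefficients of $q_0$, weight $8$ on the $9$ of $q_1$, weight $6$ on the $7$ of $q_2$, and weight $4$ on the $5$ of $q_3$. These are precisely the weights and multiplicities defining $\mathbb{P}(4^5,6^7,8^9,10^{11})$, so by definition the quotient $(\mathbb{C}^{32}\setminus\{0\})/Z$ is this weighted projective space. (Replacing the $\mathbb{C}^*$-action by its inverse only reparametrizes $Z$ and leaves the quotient unchanged, so the orientation is immaterial.)

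Because $Z$ is central, $SL(2,\mathbb{C})$ commutes with it, hence maps $Z$-orbits to $Z$-orbits and descends to an action on $P:=(\mathbb{C}^{32}\setminus\{0\})/Z$. Moreover $GL(2,\mathbb{C})=Z\cdot SL(2,\mathbb{C})$ — write $g=\lambda s$ with $\lambda^2=\det g$ and $s\in SL(2,\mathbb{C})$ — with $Z\cap SL(2,\mathbb{C})=\{\pm I\}$, so this $SL(2,\mathbb{C})$-action captures all of the residual symmetry; since $-I$ acts on $q_i$ by $(-1)^{10-2i}=1$, it lies in the kernel and the action in fact factors through $PGL(2,\mathbb{C})$, which is harmless. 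For the linearization I would pass to a power $\mathcal{O}_P(m)$ with $m=\operatorname{lcm}(4,6,8,10)$, so that it is a genuine very ample line bundle on the (singular) weighted projective space; the linear $SL(2,\mathbb{C})$-action on $\mathbb{C}^{32}$ then lifts canonically to the total space of $\mathcal{O}_P(m)$, giving the desired linearization, the lift being unobstructed precisely because $SL(2,\mathbb{C})$ is semisimple and admits no nontrivial characters.

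The weight bookkeeping of the second step is routine; the point requiring care is the last step, since $P$ is singular and $\mathcal{O}_P(1)$ is not locally free, so one must linearize with respect to a line-bundle power $\mathcal{O}_P(m)$ and justify the descent and lift through the central quotient (equivalently, work $SL(2,\mathbb{C})$-equivariantly on $\mathbb{C}^{32}$ and form the relevant $\operatorname{Proj}$). This central quotient is exactly the intermediate geometric quotient anticipated before the statement, which reduces the original $GL(2,\mathbb{C})$-problem to a reductive $SL(2,\mathbb{C})$-action ready for GIT.
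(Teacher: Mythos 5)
Your proposal is correct and follows essentially the same route as the paper: a linear $GL(2,\mathbb{C})$-action on $\mathbb{C}^{32}$ viewed as the coefficient space $\bigoplus_{i=0}^{3}\mathbb{C}[x,y]_{10-2i}$, identification of the quotient by the central $\mathbb{C}^*$ (acting with weight $10-2i$ on the coefficients of $q_i$) with $\mathbb{P}(4^5,6^7,8^9,10^{11})$, descent of the $SL(2,\mathbb{C})$-action through this central quotient, and linearization on the very ample power $\mathcal{O}(120m)$. Your added care about the left-action convention, the triviality of the $-I$ action, and the character-theoretic justification of the lift only makes explicit what the paper's graded-ring argument leaves implicit.
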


\begin{proof}
We see that the action of $GL(2,\mathbb{C})$ on normal surfaces does not act on $z^5$. Thus the $GL(2,\mathbb{C})$ action on surfaces in normal form induces and action on the coefficients of $q_i(x,y)z^i$ for $0 \leq i \leq 3$ which give a linear action on $\mathbb{C}^{32}$.

We have that $\mathbb{C}^* \hookrightarrow GL(2,\mathbb{C})$ is a closed normal subgroup and thus acts on $\mathbb{C}^{32} \setminus \{0\}$. The action of $\mathbb{C}^*$ induced by $GL(2,\mathbb{C})$ scales the monomials terms, where given $\gamma \in \mathbb{C}^*$:
\[
\gamma \cdot (x^iy^jz^k) = (\gamma x)^i(\gamma y)^j z^k =\gamma^{i + j}(x^iy^jz^k)
\]
As each coefficients (excluding $w^2$) of the normal form
\[
w^2 - z^5 - \sum_{i = 0}^3 q_i(x,y)z^i
\]
is a coordinate of $\mathbb{C}^{32}$. We have then the quotient is a weighted projective space:
\[
(\mathbb{C}^{32} \setminus \{0\}) \slash \mathbb{C}^* = \mathbb{P}(4^5, 6^7,8^9, 10^{11})
\]
where the $a_i^{b_i}$ coordinates in the weighted projective space means that there is a $b_i$ number of coordinates that are of degree $a_i$. Furthermore, the action of $GL(2,\mathbb{C})$ induces an action of
\[
PGL(2,\mathbb{C}) := GL(2,\mathbb{C}) \slash \mathbb{C}^* 
\] 
on $\mathbb{P}(4^5, 6^7,8^9, 10^{11})$.

As $SL(2,\mathbb{C})$ as a double cover of $PGL(2,\mathbb{C})$, this gives an action of $SL(2,\mathbb{C})$ on $\mathbb{P}(4^5, 6^7,8^9, 10^{11})$. Furthermore, $\mathcal{O}(120m)$ is very ample on $\mathbb{P}(4^5, 6^7,8^9, 10^{11})$ for some $m \in \mathbb{N}$, thus $|\mathcal{O}(120m)|$ defines an embedding into a projective space.  

The action of $SL(2,\mathbb{C})$ on $\mathbb{P}(4^5, 6^7,8^9, 10^{11})$ can be realized as an action on the graded ring associated to the weighted projective space. In particular, this turns out to be
\[
R := \bigoplus_k H^0(\mathbb{P}(4^5, 6^7,8^9, 10^{11}), \mathcal{O}(k))
\]
As automorphisms of the graded ring must restrict to linear maps on the graded components, then the action of $SL(2,\mathbb{C})$ is linearized on $H^0(\mathbb{P}(4^5, 6^7,8^9, 10^{11}),\mathcal{O}(120m))$.
\end{proof}

\section{Stability of Linearized $SL(2,\mathbb{C})$ action on $\mathbb{P}(4^5, 6^7,8^9, 10^{11})$ and GIT quotient}

This section investigates the linearized action of $SL(2,\mathbb{C})$-action on $\mathbb{P}(4^5, 6^7,8^9, 10^{11})$ and will show that the canonical surfaces in normal form will be stable under this action. This will result in a GIT quotients which compactifies $M_{(1,2,0)}$ as well as allowing us to obtain geometric information of the moduli space itself. 

The convention for this section will be that we are working over $\mathbb{P}(4^5, 6^7,8^9, 10^{11})$ and we will write $H^0(\mathcal{O}(k))$ as shorthand for $H^0(\mathbb{P}(4^5, 6^7,8^9, 10^{11}), \mathcal{O}(k))$. Also we will denote points in $\mathbb{P}(4^5, 6^7,8^9, 10^{11})$ as:
\[
(a_{(i,j)})) \in \mathbb{P}(4^5, 6^7,8^9, 10^{11})
\]
with $i + j = 10 - 2k$ for $k \in \{0,1,2,3\}$. This is meant to realize $a_{(i,j)}$ also as the coefficients of $x^iy^j$ in $q_{k}(x,y)$. 

\begin{proposition}
\label{initialTest}
Let $p \in \mathbb{P}(4^5, 6^7,8^9, 10^{11})$ correspond to the surface defined by the following equation:
\[
w^2 - z^5 - \sum_{i = 0}^3 q_i(x,y)z^i
\]
such that there exists some $k \in \{0,1,2,3\}$ with $q_k(x,y)$ having linear factors of multiplicities $< 5-k$ (resp. $\leq 5-k$), then $p$ is a (semi-)stable point under the linear action of $SL(2,\mathbb{C})$ on $H^0(\mathcal{O}(120m))$.
\end{proposition}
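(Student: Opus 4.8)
The plan is to apply the Hilbert--Mumford numerical criterion to the linearized $SL(2,\mathbb{C})$-action. Since every nontrivial one-parameter subgroup of $SL(2,\mathbb{C})$ is conjugate to $t\mapsto\operatorname{diag}(t^n,t^{-n})$ with $n>0$, and rescaling $n$ only rescales the Mumford weight, it suffices to test, for each ordered basis $(e,f)$ of the degree-one part $\langle x,y\rangle$, the subgroup $\lambda(t)=\operatorname{diag}(t,t^{-1})$ acting by $e\mapsto te$, $f\mapsto t^{-1}f$. Such a choice is recorded by the single point $P=\{f=0\}\in\mathbb{P}^1$, which will turn out to be the only relevant datum. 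First I would rewrite each $q_k$ in the basis $(e,f)$ as $q_k=\sum_{i+j=10-2k} b^{(k)}_{(i,j)}\,e^i f^j$; the coordinate $b^{(k)}_{(i,j)}$ carries $\lambda$-weight $r=j-i$ (this sign convention is pinned down by the classical computation for a single binary form, where the coefficient of $e^i f^{d-i}$ acquires weight $d-2i$) and weighted-projective degree $d_k=10-2k$.

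Next I would record the Hilbert--Mumford weight in the weighted setting. Using the very ample $\mathcal{O}(120m)$-embedding, the homogeneous coordinates of the image are the monomials $\prod_c b_c^{e_c}$ with $\sum_c e_c d_c=120m$, and such a monomial has $\lambda$-weight $\sum_c e_c r_c$. Minimizing this linear functional over the admissible exponent vectors supported on the nonvanishing coordinates of $p$ concentrates the entire degree budget on the coordinate minimizing $r_c/d_c$, giving
\[
\mu(p,\lambda)=-\,120m\cdot \min_{c:\, \hat p_c\neq 0}\frac{r_c}{d_c}.
\]
Hence $\mu(p,\lambda)>0$ (resp.\ $\geq 0$) precisely when some nonvanishing coordinate has $r_c<0$ (resp.\ $r_c\le 0$); the weighted-projective normalization by $d_k$ is exactly what places the four forms $q_0,\dots,q_3$ on an equal footing.

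It then remains to translate the failure of this positivity into geometry. The subgroup attached to $P$ fails to witness stability exactly when every nonvanishing coordinate has $r_c=j-i\geq 0$, i.e.\ every monomial of every $q_k$ satisfies $j\geq i$, equivalently $f^{5-k}\mid q_k$, i.e.\ $\operatorname{ord}_P q_k\geq 5-k$ for each $k\in\{0,1,2,3\}$; for semistability the corresponding obstruction is $\operatorname{ord}_P q_k\geq 6-k$ for all $k$. The decisive point is that this requires \emph{all four} forms to vanish to high order at one \emph{common} point $P$. Thus if some single $q_k$ has all linear factors of multiplicity $<5-k$ (resp.\ $\le 5-k$), then $q_k$ has no root of multiplicity $\ge 5-k$ (resp.\ $\ge 6-k$) anywhere, so no common $P$ can exist, no one-parameter subgroup destabilizes $p$, and $p$ is stable (resp.\ semistable).

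The main obstacle is getting the weighted Hilbert--Mumford bookkeeping exactly right: establishing the normalized formula with $r_c/d_c$, fixing the sign of the weight so that high-order vanishing corresponds to the destabilizing direction (calibrated against the classical binary-form criterion), and verifying that the destabilizing condition depends only on the point $P=\{f=0\}$ rather than on the full ordered basis. Once these are in place the combinatorics is immediate, since the hypothesis on a single $q_k$ directly blocks the simultaneous-vanishing condition that every potential destabilizing subgroup would require.
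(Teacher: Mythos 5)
Your proposal is correct and follows essentially the same route as the paper's proof: both apply the Hilbert--Mumford criterion to diagonal one-parameter subgroups, assign each coefficient $a_{(i,j)}$ the weight $j-i$, exploit that $120m$ is a common multiple of the degrees $4,6,8,10$ so that pure powers of single coordinates occur among the monomials of $H^0(\mathcal{O}(120m))$, and translate the multiplicity hypothesis on a single $q_k$ into the existence of nonvanishing coordinates of both signs of weight. The only differences are presentational: you package the test as an exact normalized formula $\mu(p,\lambda)=-120m\min_c(r_c/d_c)$ and argue contrapositively (a destabilizing subgroup would force every $q_k$ to vanish to order $\geq 5-k$, resp.\ $\geq 6-k$, at one common point $P$), whereas the paper directly exhibits the negative- and positive-weight witnesses demanded by its two min/max conditions.
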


\begin{proof}
Up to change of coordinates, the $1$-parameter subgroups of $SL(2,\mathbb{C})$ can be realized in the form
\[
A_r := \left [
\begin{matrix} 
\mu^r & 0\\
0 & \mu^{-r}
\end{matrix} 
\right ]
\] 
for $r \geq 0$ acting on $x$ and $y$. Furthermore, the sections of $H^0(\mathcal{O}(120m))$ corresponds to the $120m$-th graded component of the graded ring:
\[
R = \bigoplus_k H^0(\mathcal{O}(k))
\]
such that $Proj (R) = \mathbb{P}(4^5, 6^7,8^9, 10^{11})$. As the ring is generated in degree $10$, sections of $H^0(\mathcal{O}(120m))$ are polynomial with monomial terms that are products of the degree $4,6,8$ and $10$ graded parts whose degree sum, counting multiplicities, equals $120m$. Thus, with an appropriate choice of basis the coordinates of $H^0(\mathcal{O}(120m))$ has the following form:
\[
\left ( a_{(i_3, j_3)}^{\gamma_3}a_{(i_2, j_2)}^{\gamma_2}a_{(i_1, j_1)}^{\gamma_1}a_{(i_0, j_0)}^{\gamma_0} \right )
\]
where $4\gamma_3 + 6\gamma_2 + 8\gamma_1 + 10\gamma_0 = 120m$ with $i_k,j_k, \gamma_k \geq 0$, $i_k + j_k = 10-2k$ and $k \in \{0,1,2,3\}$. Now the action of $A$ on the above coordinates gives the following diagonal action:
\[
\left ( \mu^{r(\gamma_3(2j_3 - 4) + \gamma_2(2j_2 - 6) + \gamma_1(2j_1 - 8) + \gamma_0(2j_0 - 10))} \prod_{n = 0}^3 a_{(i_n,j_n)}^{\gamma_n} \right )
\]
The (semi-)stable locus is must satisfy the following two conditions:
\begin{equation}
\label{one}
\min \left \{ \gamma_3(2j_3 - 4) + \gamma_2(2j_2 - 6) + \gamma_1(2j_1 - 8) + \gamma_0(2j_0 - 10) : {\prod a_{(i_n,j_n)}^{\gamma_n} \neq 0}  \right \} < ( \leq) 0
\end{equation}
\begin{equation}
\label{two}
\max \left \{ \gamma_3(2j_3 - 4) + \gamma_2(2j_2 - 6) + \gamma_1(2j_1 - 8) + \gamma_0(2j_0 - 10) : {\prod a_{(i_n,j_n)}^{\gamma_n} \neq 0} \right \} > (\geq ) 0
\end{equation}
Now we use the assumption that $q_k$ has linear factors with multiplicities $< 5-k$ (resp. $\leq 5-k$). This implies that some $k$:
\[
\min \{j_n\} < 5-k  \hspace{5mm} (resp. \leq 5-k)
\]
thus we can find a $a_{(i_k,j_k)} \neq 0$ with $j_k < 5-k $ (resp. $\leq 5-k$). Then we know that $a_{(i_k,j_k)}^\gamma$ is a coordinate in $H^0(\mathcal{O}(120m))$ with $\gamma = \frac{120m}{i_k + j_k}$ since $120m$ is a common multiple of the degrees. This implies that:
\[
\gamma(2j_k - (10 - 2k)) \in \left \{ \gamma_3(2j_3 - 4) + \gamma_2(2j_2 - 6) + \gamma_1(2j_1 - 8) + \gamma_0(2j_0 - 10) : {\prod a_{(i_n,j_n)} \neq 0}^{\gamma_n}  \right \}
\]
Since $j_k < 5-k $ (resp. $\leq 5-k$) then $2j_k - (10 - 2k) < 0$ (resp. $\leq 0$). Thus condition (\ref{one}) is satisfied. By symmetry we also have:
\[
\min \{i_k\} < 5-k  \hspace{5mm} (resp. \leq 5-k)
\]
and, since we are working with $q_k$ being homogeneous in two variables, this implies that for each $k$ there exists $a_{(i_k,j_k)} \neq 0$ with $j_k > 5-k$, and by the same arguments condition (\ref{two}) is also satisfied. This proves that if $p$ corresponds to a surface with $q_k(x,y)$ having linear factors with multiplicities $< 5-k$ (resp. $\leq 5-k$) then $p$ is a (semi-)stable point.
\end{proof}

The above lemma shows that most canonical surfaces will appear in the stable locus, since surfaces defined with distinct factors for $q_i(x,y)$ for an open subset of the parameter space. To show all canonical surfaces in normal form are stable points under the action of $SL(2,\mathbb{C})$ on $\mathbb{P}(4^5, 6^7,8^9, 10^{11})$ we need an analysis of Du Val type singularities.

\begin{proposition}
Surfaces defined with the form:
\[
w^2 - z^5 - \sum_{k = 0}^3 q_k(x,y)z^k
\]
are stable points under the linearized $SL(2,\mathbb{C})$ on $\mathbb{P}(4^5, 6^7,8^9, 10^{11})$ by $H^0(\mathcal{O}(120m))$. 
\end{proposition}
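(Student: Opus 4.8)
The plan is to prove the contrapositive by combining the Hilbert--Mumford bookkeeping of Proposition \ref{initialTest} with a local singularity computation: I will show that the only surfaces of this form \emph{not} already covered by Proposition \ref{initialTest} fail to have canonical singularities, and are therefore excluded by the standing hypothesis of Proposition \ref{form1}.

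First I would recast stability purely in terms of the $q_k$. Reusing the weight computation in the proof of Proposition \ref{initialTest}, for the one-parameter subgroup $A_r$ with fixed points $[1:0]$ and $[0:1]$ on $\mathbb{P}^1_{[x:y]}$, condition (\ref{one}) fails precisely when every nonzero monomial $x^i y^j$ of every $q_k$ has $j \geq 5-k$; since a pure power $a_{(i,j)}^{\gamma}$ has weight $\gamma(2j-(10-2k))$ and products can only raise the minimum, this is equivalent to $[1:0]$ being a root of each $q_k$ of multiplicity $\geq 5-k$. Symmetrically, (\ref{two}) concerns $[0:1]$. Ranging over all one-parameter subgroups (equivalently all choices of the two fixed points) yields the criterion: $p$ is stable if and only if for every $P \in \mathbb{P}^1$ there is some $k \in \{0,1,2,3\}$ with $\operatorname{mult}_P(q_k) < 5-k$. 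Thus it suffices to prove that if some point $P$ is a common root of all the $q_k$ with $\operatorname{mult}_P(q_k) \geq 5-k$, then $S$ is not canonical.

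Next, assume such a $P$ exists and, after a coordinate change on $[x:y]$ (which conjugates the action and preserves stability), take $P = [0:1]$, so that $x^{5-k}\mid q_k$ and $q_k = x^{5-k} g_k(x,y)$ with $\deg g_k = 5-k$. The point $[0:1:0:0]$ lies on $S$ and sits in the smooth chart $\{y \neq 0\}\cong \mathbb{C}^3$ of $\mathbb{P}(1,1,2,5)$ with coordinates $(x,z,w)$, where $S$ is cut out by
\[
w^2 - z^5 - \sum_{k=0}^3 x^{5-k} g_k(x,1)\, z^k .
\]
Assigning the weights $(x,z,w) = (2,2,5)$, every displayed monomial has weighted degree $10$, while the corrections from $g_k(x,1)-g_k(0,1)$ have weighted degree $>10$; the weighted tangent cone is therefore $w^2 - P_5(x,z)$ with $P_5(x,z) = z^5 + \sum_{k=0}^3 g_k(0,1)\, x^{5-k} z^k$, a binary quintic (already in reduced form since the normal form kills the $z^4$-term).

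Finally, I would compute the discrepancy of the exceptional divisor of the weighted blow-up of $\mathbb{C}^3$ with weights $(2,2,5)$: as the weighted order of the defining equation is $10$, adjunction gives discrepancy $(2+2+5)-10-1 = -2 < -1$, so $S$ is not log canonical at $[0:1:0:0]$, hence not canonical; and if $P_5$ has a repeated root the singularity is non-isolated, so again non-canonical. Either way $S$ violates the hypothesis of Proposition \ref{form1}, which completes the contrapositive and hence the proposition. The main obstacle I anticipate is the last two steps: justifying that the local analytic type is governed by the weighted tangent cone $w^2 - P_5$ (finite determinacy for weighted-homogeneous isolated singularities) and invoking the discrepancy/log-canonical criterion correctly; the reduction in the first step is essentially bookkeeping on top of Proposition \ref{initialTest}.
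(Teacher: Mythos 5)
Your proposal is correct in substance, but it runs in the opposite direction from the paper's argument and uses a different singularity-theoretic input, so it is worth comparing the two. The paper argues directly: citing \cite[Section 4.25]{KollarMori}, canonicity forces the branch divisor $z^5 + \sum_k q_k(x,y)z^k$ to have multiplicity $\leq 3$ at every point $[x_0:y_0:0]$, hence at every $P \in \mathbb{P}^1$ some $q_k$ vanishes to order $\leq 3-k$; feeding this into the weight bookkeeping of Proposition \ref{initialTest} (with room to spare, since $3-k < 5-k$) verifies both conditions (\ref{one}) and (\ref{two}) for every one-parameter subgroup. You instead prove the contrapositive: you first sharpen Proposition \ref{initialTest} to an exact criterion (stable iff no $P \in \mathbb{P}^1$ has $\operatorname{mult}_P(q_k) \geq 5-k$ for all $k$) --- this step is correct and is a nice by-product the paper never states --- and then show that a surface violating it is non-canonical via the weighted blow-up with weights $(2,2,5)$, getting discrepancy $-2 < -1$, hence not log canonical, hence not canonical. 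That computation is also sound, modulo the standard inversion-of-adjunction (or normality of the strict transform) step needed to pass from the pair $(\mathbb{C}^3,S)$ to $S$ itself, and it proves strictly more than the proposition: the whole non-stable locus consists of non-log-canonical surfaces. What each route buys: the paper's is shorter because it outsources the singularity theory to the quoted Du Val multiplicity bound --- and indeed you could shortcut your last step the same way, since under your hypothesis the branch curve has ordinary multiplicity $5 > 3$ at the origin of the chart; yours is self-contained and yields the sharp description of the stable locus. Two corrections to your own caveats: the finite-determinacy worry is unnecessary, since the discrepancy bound depends only on the weighted order of the \emph{full} local equation (every term has weighted order $\geq 10$), not on the analytic type of the weighted tangent cone; and the side claim that a repeated root of $P_5$ forces a non-isolated singularity is false (e.g.\ $w^2 = z^5 + x^{10}$ is isolated with $P_5 = z^5$) --- but that case split can simply be deleted, for the same reason.
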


\begin{proof}
From \cite[Section 4.25]{KollarMori}, the above surface has at worst canonical singularities implies that
\[
mult_{[x_0:y_0: 0]}(z^5 - \sum_{k = 0}^3 q_k(x,y)z^k ) \leq 3
\]
for all points $[x_0:y_0: 0] \in \mathbb{P}(1,1,2)$. This implies that each $q_k(x,y)$ with $k \in \{0,1,2\}$ cannot simultaneously have $x^{i_k}$ as a factor of $q_k(x,y)$ where $i_k > 3 - k$. By symmetrical arguments the same can be said for $y$, possibly for some other $k$. Thus there exists a $k \in \{0,1,2, 3\}$ such that $a_{(i,j)} \neq 0$ with $i + j = 10 - 2k$ and $i \leq 3 - k$ which implies $j \geq 7 - k >5-k$. Using the above arguments, of proposition \ref{initialTest}, we see that condition (\ref{one}) is satisfied and by the same symmetrical arguments condition (\ref{two}) is also satisfied. Thus, the points corresponding to the canonical surfaces in normal form are stable points.
\end{proof}

\begin{remark}
From proposition \ref{initialTest}, we see not all stable points corresponds to canonical surfaces. We can see this with the following surface of normal form:
\[
w^2 - z^5 - \sum_{i = 0}^3 q_i(x,y)z^i
\]
where each of the $q_k(x,y)$ has a common linear factor of multiplicity $4-k$ with all other linear factors being distinct. Then we have that for some $[x_0:y_0: 0] \in \mathbb{P}(1,1,2)$ that
\[
mult_{[x_0:y_0: 0]} \left (z^5 + \sum_{i = 0}^3 q_i(x,y)z^i \right) = 4
\]
which from \cite[Section 4.25]{KollarMori} is known to to have singularities worst than canonical.
\end{remark}

\begin{corollary}
There is a GIT quotient, $\overline{M}_{(1,2,0)}$, of $\mathbb{P}(4^5, 6^7,8^9, 10^{11})$ by $SL(2,\mathbb{C})$ which compactifies $M_{(1,2,0)}$.
\end{corollary}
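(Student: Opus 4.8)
The plan is to assemble the results of this section and invoke the fundamental theorem of Geometric Invariant Theory. Since $SL(2,\mathbb{C})$ is reductive and, by the final proposition of the preceding section, acts linearly on the projective variety $\mathbb{P}(4^5, 6^7, 8^9, 10^{11})$ via the very ample sheaf $\mathcal{O}(120m)$, Mumford's construction produces a projective categorical quotient
\[
\overline{M}_{(1,2,0)} := \mathbb{P}(4^5, 6^7, 8^9, 10^{11}) /\!\!/ SL(2,\mathbb{C})
\]
of the semistable locus, together with a geometric quotient of the stable locus that sits inside $\overline{M}_{(1,2,0)}$ as an open subscheme. The projectivity of this quotient is precisely what will exhibit it as a compactification, so the only real content is to identify the geometric quotient of the stable locus with $M_{(1,2,0)}$.

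For that identification I would argue as follows. By Lemma \ref{finalForm} and the subsequent lemma, every canonical surface with $K_S^2 = 1$, $p_g = 2$ is isomorphic to a surface in normal form, and two surfaces in normal form are isomorphic exactly when their parameter points differ by the residual $PGL(2,\mathbb{C})$-action, equivalently by the $SL(2,\mathbb{C})$-action on $\mathbb{P}(4^5, 6^7, 8^9, 10^{11})$. Hence the $SL(2,\mathbb{C})$-orbits meeting the locus $U$ of canonical surfaces are in bijection with isomorphism classes. The preceding proposition shows that every point of $U$ is \emph{stable}, so $U$ lies entirely in the stable locus, where orbits are closed with finite stabilizers and the geometric quotient is an orbit space. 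The quotient $U / SL(2,\mathbb{C})$ therefore coarsely parameterizes these surfaces, and since Gieseker's coarse moduli space $M_{(1,2,0)}$ is pinned down by the same set of isomorphism classes, the two are identified as an open subvariety of $\overline{M}_{(1,2,0)}$.

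With $\overline{M}_{(1,2,0)}$ projective and containing $M_{(1,2,0)}$ as an open subvariety, the corollary follows. I expect the main obstacle to be the identification in the second paragraph: one must upgrade the bijection on points to a comparison of scheme structures, checking that the GIT orbit space coarsely represents the same moduli functor as $M_{(1,2,0)}$ rather than agreeing only set-theoretically. This amounts to confirming that the stabilizers appearing on the stable locus are exactly the finite automorphism groups of the surfaces, and that the open immersion $M_{(1,2,0)} \hookrightarrow \overline{M}_{(1,2,0)}$ is a genuine morphism of schemes. The remark above, showing that the stable locus strictly contains $U$, makes clear that some boundary points correspond to worse-than-canonical surfaces, so care is needed to ensure no extraneous identifications occur along the interface between $U$ and the remainder of the stable locus.
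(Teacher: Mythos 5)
Your proposal is correct and follows exactly the route the paper intends: the paper states this corollary without any written proof, treating it as an immediate consequence of the preceding propositions (the linearized $SL(2,\mathbb{C})$-action on $\mathbb{P}(4^5, 6^7,8^9, 10^{11})$ and the stability of points corresponding to canonical surfaces) combined with the standard GIT theorem that a reductive group acting via an ample linearization on a projective variety yields a projective quotient of the semistable locus containing a geometric quotient of the stable locus as an open subset --- which is precisely what you assemble. Your additional care in the second and third paragraphs, about upgrading the orbit--isomorphism-class bijection to an identification of scheme structures with Gieseker's coarse moduli space, addresses a genuine point that the paper leaves entirely implicit, so if anything your write-up is more complete than the paper's.
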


\begin{theorem}
\label{main}
$M_{(1,2,0)}$ is irreducible, of dimension 28 and $\overline{M}_{(1,2,0)}$ is unirational.
\end{theorem}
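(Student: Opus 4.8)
The plan is to assemble the three claimed properties from the GIT machinery already built up, treating each in turn and leaning on the preceding propositions. The main theorem asserts that $M_{(1,2,0)}$ is irreducible, of dimension $28$, and that its compactification $\overline{M}_{(1,2,0)}$ is unirational. My strategy is to identify $\overline{M}_{(1,2,0)}$ with the GIT quotient $\mathbb{P}(4^5,6^7,8^9,10^{11})^{ss}\!\sslash SL(2,\mathbb{C})$ constructed in the preceding corollary, and then derive all three conclusions from structural features of this quotient together with the parameter count already performed.

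First I would establish the dimension. The weighted projective space $\mathbb{P}(4^5,6^7,8^9,10^{11})$ arose as $(\mathbb{C}^{32}\setminus\{0\})/\mathbb{C}^*$, so it has dimension $31$. The group $SL(2,\mathbb{C})$ acting on it is $3$-dimensional, and I would argue that the action on the stable locus has finite (generically trivial, up to the finite automorphism groups of surfaces of general type) stabilizers, so that a generic orbit is $3$-dimensional. Hence a geometric quotient of the stable locus has dimension $31 - 3 = 28$. Since the proposition shows that points corresponding to canonical surfaces in normal form are stable, and since $M_{(1,2,0)}$ is precisely the locus of isomorphism classes of such canonical surfaces, this gives $\dim M_{(1,2,0)} = 28$. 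I would note that one should check the stabilizer claim: a generic surface of type $(1,2,0)$ has trivial automorphism group (or at least finite, which is all that is needed for the dimension count, since finite stabilizers do not reduce orbit dimension).

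Next I would handle irreducibility and unirationality together, since they follow from the same source. The parameter space $\mathbb{C}^{32}$ is irreducible, and $U\subset\mathbb{C}^{32}$ (or its projectivization, the canonical locus) is a nonempty Zariski-open subset, hence irreducible. The GIT quotient is the image of the semistable locus under the quotient morphism $\mathbb{P}(4^5,6^7,8^9,10^{11})^{ss}\to\overline{M}_{(1,2,0)}$, and the continuous image of an irreducible space is irreducible; since $M_{(1,2,0)}$ is the image of the irreducible $U$, it is irreducible, and its closure $\overline{M}_{(1,2,0)}$ is then also irreducible. For unirationality, the key observation is that the rational quotient map $\mathbb{P}(4^5,6^7,8^9,10^{11})\dashrightarrow\overline{M}_{(1,2,0)}$ is dominant, and the source is a rational variety (weighted projective spaces are rational, being quotients of projective space, equivalently birational to $\mathbb{P}^{31}$). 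A dominant rational map from a rational variety exhibits the target as unirational; I would invoke precisely this principle to conclude that $\overline{M}_{(1,2,0)}$ is unirational.

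The step I expect to be the genuine obstacle is the dimension computation, specifically the justification that the generic orbit is $3$-dimensional, i.e.\ that the generic stabilizer in $SL(2,\mathbb{C})$ (equivalently $PGL(2,\mathbb{C})$) is finite. This requires knowing that a generic surface in normal form has only finitely many automorphisms compatible with the normal form, which is tied to the fact that a generic binary form configuration $\prod q_i(x,y)$ has finite projective symmetry. The remaining delicate point is ensuring that passing to normal form is unique enough that $M_{(1,2,0)}$ really is the geometric quotient of the canonical locus (and not, say, a finite quotient thereof by residual torsion like the $\mathbb{Z}/2$ arising from $SL_2\to PGL_2$); this residual finite group does not affect dimension, irreducibility, or unirationality, so it is harmless for the theorem as stated, and I would remark as much rather than resolve it in full. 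The irreducibility and unirationality arguments, by contrast, are essentially formal consequences of the irreducibility and rationality of the source together with dominance, and should go through without difficulty.
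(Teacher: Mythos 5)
Your proposal is correct and follows essentially the same route as the paper: the paper likewise obtains $\dim M_{(1,2,0)} = 28$ by counting dimensions for the geometric quotient of the stable locus ($31 - 3 = 28$), gets irreducibility from the stable locus being open in the irreducible variety $\mathbb{P}(4^5,6^7,8^9,10^{11})$, and gets unirationality from the dominant rational map $\mathbb{P}^{32} \rightarrow \mathbb{P}(4^5,6^7,8^9,10^{11}) \dashrightarrow \overline{M}_{(1,2,0)}$. The points you flag as delicate---finiteness of the generic stabilizer and the identification of $M_{(1,2,0)}$ with the quotient of the canonical locus---are left implicit in the paper (the former is automatic since properly stable points have finite stabilizers, equivalently since surfaces of general type have finite automorphism groups), so your elaboration fills in, rather than diverges from, the paper's argument.
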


\begin{proof}
As the GIT quotient is a geometric quotient on the stable locus, by counting dimensions we obtain $\dim (M_{(1,2,0)}) = 28$. As the stable locus is open in an irreducible variety, the image must also be irreducible. Lastly, there is a dominant rational map
\[
\mathbb{P}^{32} \rightarrow \mathbb{P}(4^5, 6^7,8^9, 10^{11}) \dashrightarrow \overline{M}_{(1,2,0)}
\]
which shows that $\overline{M}_{(1,2,0)}$ is unirational.
\end{proof}

\bibliographystyle{plain}
\bibliography{refs}{}

\end{document}